\title
{The  fractional Hardy inequality with a remainder term}
\author[B{.} Dyda]
{Bart{\l}omiej Dyda}
\address{Institute of Mathematics and Computer Science, Wroc{\l}aw University of Technology,
Wybrze\.ze Wyspia\'nskiego 27,
50-370 Wroc{\l}aw, Poland}
\email{bdyda@pwr.wroc.pl}
\theoremstyle{plain}
\newtheorem{thm}{Theorem}
\newtheorem{lem}[thm]{Lemma}
\newtheorem{cor}[thm]{Corollary}
\theoremstyle{definition}
\theoremstyle{remark}
\newtheorem*{rem*}{Remark}
\newcommand{\R}{\mathbb{R}}
\DeclareMathOperator{\dist}{dist}
\DeclareMathOperator{\diam}{diam}
\DeclareMathOperator{\supp}{supp}
\begin{document}
\sloppy \footnotetext{\emph{2010 Mathematics Subject
Classification:} Primary 26D10, secondary 46E35, 31C25.\\
\emph{Key words and phrases:} fractional Hardy inequality,
  best constant, interval, fractional Laplacian, censored stable
  process, convex domain, error term, ground state representation.
\\ Research partially supported by grant MNiSW 3971/B/H03/2009/37.}

\begin{abstract}
We calculate the regional fractional  Laplacian
on some power function on an interval.
As an application, we prove Hardy inequality with an extra term
for the fractional Laplacian on the interval with the optimal
constant. As a result,
we obtain the fractional Hardy inequality with best constant
and an extra lower-order term for general domains, following the method developed
 by M.~Loss and C.~Sloane \cite{LossSloane}.
\end{abstract}
\maketitle
\section{Main result and discussion}\label{sec:i}
The purpose of this note is to prove the Hardy-type
inequality with an extra term in
the interval $(-1,1)$ -- Corollary~\ref{hardy} -- by the method of \cite{Fitz}.
We will obtain it from the following formula
analogous to the 'ground state representation' \cite{MR2469027}.
\begin{thm}\label{hardyerrorterm}
Let $0<\alpha<2$.
Let $w(x)=(1-x^2)^{(\alpha-1)/2}$.
For every $u\in C_c(-1,1)$,
\begin{align}
\mathcal{E}(u)&:=
\frac{1}{2}
\int_{-1}^1 \! \int_{-1}^1
\frac{(u(x)-u(y))^2}{|x-y|^{1+\alpha}} \,dx\,dy\nonumber\\
&=
\frac{1}{2}
\int_{-1}^1 \! \int_{-1}^1
\left(\frac{u(x)}{w(x)} - \frac{u(y)}{w(y)}\right)^2 
   \frac{w(x)w(y)}{|x-y|^{1+\alpha}} \,dx\,dy\nonumber\\
&+
{2^\alpha \kappa_{1,\alpha}}
\int_{-1}^1 {u^2(x)}
\;
(1-x^2)^{-\alpha}dx
+ 
\frac{1}{\alpha}
\int_{-1}^1 {u^2(x)}
\;
\phi(x)(1-x^2)^{-\alpha}\,dx
\label{hardyeq};
\end{align}
where $\phi(x)=2^\alpha - (1+x)^\alpha - (1-x)^\alpha$
and
\begin{equation}
  \label{eq:ns}
\kappa_{n,\alpha}=
\pi^{\frac{n-1}{2}} \frac{\Gamma(\frac{1+\alpha}{2})}{\Gamma(\frac{n+\alpha}{2})}
\frac{B\left(\frac{1+\alpha}{2}, \frac{2-\alpha}{2}\right)
  -2^{\alpha}}{\alpha2^{\alpha}}\,.
\end{equation}
\end{thm}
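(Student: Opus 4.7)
The identity \eqref{hardyeq} is a ``ground-state representation'' in the spirit of Frank--Seiringer \cite{MR2469027}, and its entire content is the explicit pointwise computation of the regional fractional Laplacian
\[
L_\alpha w(x):=\mathrm{p.v.}\!\int_{-1}^{1}\frac{w(x)-w(y)}{|x-y|^{1+\alpha}}\,dy,
\]
which the abstract advertises as the main technical ingredient.

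\emph{Step 1 (algebraic reduction).} Set $v:=u/w$; this is continuous with compact support in $(-1,1)$ because $w$ is bounded above and below by positive constants on $\supp u$. A direct expansion gives
\begin{align*}
(v(x)w(x)-v(y)w(y))^{2}
&=(v(x)-v(y))^{2}\,w(x)w(y)\\
&\quad{}+v(x)^{2}w(x)(w(x)-w(y))+v(y)^{2}w(y)(w(y)-w(x)).
\end{align*}
Dividing by $|x-y|^{1+\alpha}$, integrating over $(-1,1)^{2}$, halving, and using the $x\leftrightarrow y$ symmetry on the last two terms, one obtains
\[
\mathcal{E}(u)=\frac{1}{2}\!\int_{-1}^{1}\!\int_{-1}^{1}\!\left(\frac{u(x)}{w(x)}-\frac{u(y)}{w(y)}\right)^{2}\!\frac{w(x)w(y)}{|x-y|^{1+\alpha}}\,dxdy+\int_{-1}^{1}\!u(x)^{2}\,\frac{L_\alpha w(x)}{w(x)}\,dx.
\]
Thus \eqref{hardyeq} is equivalent to the single pointwise identity
\begin{equation}\label{eq:plankey}
\frac{L_\alpha w(x)}{w(x)}=2^{\alpha}\kappa_{1,\alpha}(1-x^{2})^{-\alpha}+\tfrac{1}{\alpha}\phi(x)(1-x^{2})^{-\alpha},\quad x\in(-1,1).
\end{equation}

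\emph{Step 2 (computing $L_\alpha w$).} By the parity $w(-x)=w(x)$ one may assume $x\in[0,1)$. The natural route is a change of variables that simultaneously simplifies $1-y^{2}$ and $|x-y|$: the M\"obius-type substitution $y=(x-t)/(1-xt)$ maps $(-1,1)$ onto itself and satisfies
\[
1-y^{2}=\frac{(1-x^{2})(1-t^{2})}{(1-xt)^{2}},\qquad y-x=-\frac{(1-x^{2})\,t}{1-xt},
\]
so after substitution the $x$-dependence factors out as a pure power of $(1-x^{2})$, and the integrand decomposes into elementary pieces evaluated by Beta integrals. The Beta function $B\bigl(\tfrac{1+\alpha}{2},\tfrac{2-\alpha}{2}\bigr)$ visible in \eqref{eq:ns} emerges naturally this way. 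A useful sanity check is the half-line limit: the regional fractional Laplacian of $s^{(\alpha-1)/2}$ on $(0,\infty)$ is a classical computation producing a pure multiple of $s^{-\alpha}$, which accounts for the constant term $2^{\alpha}\kappa_{1,\alpha}$; the correction $\phi(x)/\alpha$, which vanishes at $x=\pm1$ since $\phi(\pm 1)=0$, measures the finite-interval contribution.

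\emph{Main obstacle.} All of the difficulty lies in Step~2: carefully tracking the principal value at $y=x$ together with the boundary behaviour of $w$ at $y=\pm 1$ (where $w$ vanishes or blows up, depending on the sign of $\alpha-1$), and then algebraically reassembling the resulting Beta-function values into exactly the combination $2^{\alpha}\kappa_{1,\alpha}+\phi(x)/\alpha$ with the normalization prescribed by \eqref{eq:ns}. Any slip in this bookkeeping would spoil the sharpness of the Hardy constant in the subsequent Corollary~\ref{hardy}.
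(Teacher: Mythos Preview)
Your plan is correct and matches the paper's proof exactly: the paper also reduces to computing $-Lw/w$ via the same elementary algebraic identity (your Step~1 is the paper's Lemma~\ref{lemfitz}), and then computes the regional fractional Laplacian of $(1-x^{2})^{p}$ via precisely the M\"obius substitution $y=(x-t)/(1-xt)$ you propose (your Step~2 is the paper's Lemma~\ref{laplasjanup}, specialised to $p=(\alpha-1)/2$, where the residual principal-value integral $I(p)$ vanishes). The only content the paper supplies beyond your sketch is the explicit bookkeeping you flag as the ``main obstacle''.
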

Here $B$ is the Euler beta function, and 
$C_c(a,b)$ denotes the class of all the continuous functions
$u\colon(a,b)\to \R$ with compact support in $(a,b)$.

\begin{cor}\label{hardy}
Let $1<\alpha<2$.
For every $u\in C_c(a,b)$,
\begin{align}
\frac{1}{2}
\int_{a}^b \! \int_{a}^b
\frac{(u(x)-u(y))^2}{|x-y|^{1+\alpha}} \,dx\,dy\
  &\geq
{\kappa_{1,\alpha}}
\int_a^b {u^2(x)}
\;
\left(
 \frac{1}{x-a}+\frac{1}{b-x} \right)^\alpha dx\nonumber\\
&+ 
\frac{4 - 2^{3-\alpha}}{\alpha (b-a)}
\int_a^b {u^2(x)}
\;
\left(
 \frac{1}{x-a}+\frac{1}{b-x} \right)^{\alpha-1} dx. \label{Hardyin}
\end{align}
Constant $\kappa_{1,\alpha}$ in {\rm (\ref{Hardyin})}
may not be replaced by
a bigger constant.
\end{cor}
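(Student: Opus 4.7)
I would derive \eqref{Hardyin} directly from Theorem~\ref{hardyerrorterm}: discard the non-negative first (double-integral) term of \eqref{hardyeq}, rewrite the two remaining weights in the stated form, and rescale from $(-1,1)$ to a general interval $(a,b)$. For $u\in C_{c}(-1,1)$ the quotient $u/w$ is continuous with compact support in $(-1,1)$ (here the restriction $1<\alpha$ is comfortable, since $w$ is then bounded), so the first term on the right of \eqref{hardyeq} is finite and non-negative and may be dropped.

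Using the elementary identity $\tfrac{1}{1-x}+\tfrac{1}{1+x}=\tfrac{2}{1-x^{2}}$, the main weight rewrites as $2^{\alpha}(1-x^{2})^{-\alpha}=\bigl(\tfrac{1}{1-x}+\tfrac{1}{1+x}\bigr)^{\alpha}$, so the leading term already matches \eqref{Hardyin} with the advertised constant $\kappa_{1,\alpha}$. For the remainder, the key step is the pointwise inequality
\[
\phi(x)=2^{\alpha}-(1+x)^{\alpha}-(1-x)^{\alpha}\;\geq\;(2^{\alpha}-2)(1-x^{2}),\qquad x\in[-1,1],\ 1<\alpha<2.
\]
Granting this, the last integrand in \eqref{hardyeq} dominates $\tfrac{2^{\alpha}-2}{\alpha}u^{2}(x)(1-x^{2})^{1-\alpha}$, and the same identity converts this to $\tfrac{2-2^{2-\alpha}}{\alpha}u^{2}(x)\bigl(\tfrac{1}{1-x}+\tfrac{1}{1+x}\bigr)^{\alpha-1}$, which is the remainder term of \eqref{Hardyin} in the case $(a,b)=(-1,1)$. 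Passing to arbitrary $(a,b)$ is then an affine change of variables $x=a+(b-a)(t+1)/2$: $\mathcal{E}$ scales by $(2/(b-a))^{\alpha}$, the two weighted integrals pick up compensating powers of $(b-a)/2$, and the constant $\tfrac{2-2^{2-\alpha}}{\alpha}$ becomes $\tfrac{4-2^{3-\alpha}}{\alpha(b-a)}$.

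For the optimality claim, note that the remainder term on the right of \eqref{Hardyin} is non-negative (the coefficient $4-2^{3-\alpha}$ is positive for $1<\alpha<2$), so any improvement of the leading constant would improve the plain fractional Hardy inequality on an interval, whose sharp constant is known to be $\kappa_{1,\alpha}$ (cf.~\cite{LossSloane}). The main obstacle is the pointwise inequality $\phi(x)\geq(2^{\alpha}-2)(1-x^{2})$: the two sides already agree at $x=0$ and at $x=\pm 1$, and they coincide identically at the limiting parameters $\alpha=1$ and $\alpha=2$, so a convexity or single-derivative monotonicity argument will not suffice. I would set $h(x):=\phi(x)-(2^{\alpha}-2)(1-x^{2})$, exploit evenness $h(-x)=h(x)$, and determine the sign of $h$ on $[0,1]$ from the sign pattern of $h''$ together with the boundary data $h(0)=h(1)=0$.
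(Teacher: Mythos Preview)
Your proposal is correct and follows the same overall route as the paper: drop the non-negative double integral in \eqref{hardyeq}, establish the pointwise bound $\phi(x)\geq(2^{\alpha}-2)(1-x^{2})$, and rescale from $(-1,1)$ to $(a,b)$; optimality is quoted from the literature (the paper cites \cite{KBBD-bc} rather than \cite{LossSloane}). The only substantive difference is the proof of the elementary inequality: the paper substitutes $u=x^{2}$ and shows that $g(u):=(2^{\alpha}-2)u-(1-\sqrt{u})^{\alpha}-(1+\sqrt{u})^{\alpha}+2$ is concave with $g(0)=g(1)=0$, reducing this in turn to the decrease of a difference quotient of a concave function, whereas you propose a direct second-derivative analysis of $h(x)=\phi(x)-(2^{\alpha}-2)(1-x^{2})$. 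Your plan also works: on $[0,1)$ one checks that $h''(x)=2(2^{\alpha}-2)-\alpha(\alpha-1)\bigl[(1+x)^{\alpha-2}+(1-x)^{\alpha-2}\bigr]$ is strictly decreasing (the bracket is increasing for $1<\alpha<2$), positive at $0$ and tending to $-\infty$ at $1$, so $h''$ has a single sign change; combined with $h'(0)=0$ and $h(0)=h(1)=0$ this forces $h\geq 0$. The paper's substitution is slightly slicker in that concavity of $g$ immediately yields the conclusion without tracking sign changes, but both arguments are short and elementary.
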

On the right-hand side of (\ref{Hardyin}) we note the function
$\left(
 \frac{1}{x-a}+\frac{1}{b-x} \right) \geq \dist(x,(a,b)^c)$, where
$\dist(x, (a,b)^c)$ equals the distance of $x$ to the complement
of $(a,b)$.
Analogous Hardy inequalities, 
involving the distance to the complement of a domain,
and with optimal constants
 were derived
in \cite{FLS, MR2469027,  KBBD-bc, FrankSeiringer, LossSloane}.

In fact, by the method developed in the paper \cite{LossSloane},
from Corollary~\ref{hardy}
one gets the fractional Hardy inequality with a remainder term for
general domains.

\begin{thm}\label{hardyomega}
Let $1<\alpha<2$ and
let $\Omega \subset \R^n$ be a bounded domain.
For any $u\in C_c(\Omega)$
\begin{align}
\frac{1}{2}
\int_{\Omega\times\Omega} &
\frac{(u(x)-u(y))^2}{|x-y|^{n+\alpha}} \,dx\,dy
  \geq
{\kappa_{n,\alpha}}
\int_\Omega \frac{u^2(x)}{M_\alpha(x)^\alpha}\,dx \nonumber\\
&+ 
\frac{\pi^{(n-1)/2} \Gamma(\frac{\alpha}{2}) (4 - 2^{3-\alpha})}
{\alpha\Gamma(\frac{n+\alpha-1}{2})}\;
\frac{1}{\diam \Omega}
\int_\Omega \frac{u^2(x)}{M_{\alpha-1}(x)^{\alpha-1}}\,dx, \label{hardyomegain}
\end{align}
where $M_{\beta}$ is defined in \cite[formula (6)]{LossSloane}.
In particular, if $\Omega$ is a bounded convex domain, then for any
$f\in C_c(\Omega)$ we have
\begin{align}
\frac{1}{2}
\int_{\Omega\times\Omega} &
\frac{(u(x)-u(y))^2}{|x-y|^{n+\alpha}} \,dx\,dy
  \geq
{\kappa_{n,\alpha}}
\int_\Omega \frac{u^2(x)}{\dist(x,\Omega^c)^\alpha}\,dx \nonumber\\
&+ 
\frac{\pi^{(n-1)/2} \Gamma(\frac{\alpha}{2}) (4 - 2^{3-\alpha})}
{\alpha\Gamma(\frac{n+\alpha-1}{2})}\;
\frac{1}{\diam \Omega}
\int_\Omega \frac{u^2(x)}{\dist(x,\Omega^c)^{\alpha-1}}\,dx, \label{hardyconvexin}
\end{align}
Constant $\kappa_{n,\alpha}$ in {\rm (\ref{hardyomegain})} 
and {\rm (\ref{hardyconvexin})}
may not be replaced by
a bigger constant.
\end{thm}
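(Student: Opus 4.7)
The plan is to lift Corollary \ref{hardy} to $n$ dimensions by the slicing procedure of Loss and Sloane \cite{LossSloane}. For each direction $\omega\in\mathbb{S}^{n-1}$, parametrize $\mathbb R^n=\omega^\perp\oplus\mathbb R\omega$ by $x=z+s\omega$; combining the polar substitution $y=x+r\omega$ with Fubini yields the standard slicing identity
\begin{equation*}
\int_\Omega\!\int_\Omega \frac{(u(x)-u(y))^2}{|x-y|^{n+\alpha}}\,dx\,dy
 =\frac12\int_{\mathbb{S}^{n-1}}\!d\omega\int_{\omega^\perp}\!dz\int_{I_{z,\omega}}\!\int_{I_{z,\omega}} \frac{(u_\omega^z(s)-u_\omega^z(t))^2}{|s-t|^{1+\alpha}}\,ds\,dt,
\end{equation*}
where $u_\omega^z(s):=u(z+s\omega)$ and $I_{z,\omega}:=\{s\in\mathbb R : z+s\omega\in\Omega\}$ is the trace of $\Omega$ on the line through $z$ with direction $\omega$.

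The open set $I_{z,\omega}$ is a countable disjoint union of maximal intervals $(a_k,b_k)$; since $u\in C_c(\Omega)$ vanishes near $\partial\Omega$, each restriction $u_\omega^z|_{(a_k,b_k)}$ belongs to $C_c(a_k,b_k)$. Discarding the (non-negative) off-diagonal contributions $k\neq k'$ and applying Corollary \ref{hardy} on each component, then performing the $z$-integration by Fubini and changing variables $x=z+s\omega$, I obtain lower bounds of the form $\int_\Omega u^2(x)(d^-_\omega(x)^{-1}+d^+_\omega(x)^{-1})^\beta\,dx$ for $\beta\in\{\alpha,\alpha-1\}$, where $d^\pm_\omega(x)$ denotes the distance from $x$ to $\partial\Omega$ along $\pm\omega$. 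For the remainder term I also use $b_k-a_k=d^-_\omega(x)+d^+_\omega(x)\leq\diam\Omega$ to pull out the uniform factor $1/\diam\Omega$.

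It remains to integrate over $\omega$. By the definition of $M_\beta$ in \cite[formula (6)]{LossSloane}, the angular integral $\int_{\mathbb{S}^{n-1}}(d^-_\omega(x)^{-1}+d^+_\omega(x)^{-1})^\beta\,d\omega$ equals, up to the factor $2\pi^{(n-1)/2}\Gamma(\tfrac{1+\beta}{2})/\Gamma(\tfrac{n+\beta}{2})$, the weight $M_\beta(x)^{-\beta}$. Matching gamma ratios with $\beta=\alpha$ converts $\kappa_{1,\alpha}$ into $\kappa_{n,\alpha}$ in view of \eqref{eq:ns}, while $\beta=\alpha-1$ produces the coefficient $\pi^{(n-1)/2}\Gamma(\alpha/2)(4-2^{3-\alpha})/(\alpha\,\Gamma(\tfrac{n+\alpha-1}{2}))$ appearing in \eqref{hardyomegain}. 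For convex $\Omega$, the Loss--Sloane comparison $M_\beta(x)\leq\dist(x,\Omega^c)$ immediately upgrades \eqref{hardyomegain} to \eqref{hardyconvexin}, and sharpness of $\kappa_{n,\alpha}$ is inherited from the corresponding sharpness result in \cite{LossSloane}, since the remainder term has a weight of strictly lower order in the distance to $\Omega^c$ and therefore cannot be absorbed by enlarging $\kappa_{n,\alpha}$. The one step that requires care is verifying that the angular normalizations align exactly with the $\Gamma$-factors in \eqref{eq:ns}; this, however, is essentially the same computation already carried out by Loss and Sloane for the main term, so no genuine obstacle remains.
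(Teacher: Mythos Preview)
Your argument is correct and follows essentially the same route as the paper: both lift Corollary~\ref{hardy} to $\mathbb{R}^n$ via the Loss--Sloane slicing/averaging over directions, bound the length of each one-dimensional section by $\diam\Omega$ to extract the remainder term, and then invoke the $M_\beta$ normalization (and, in the convex case, the comparison $M_\beta\le\dist(\cdot,\Omega^c)$) from \cite{LossSloane}. Your write-up is in fact more explicit than the paper's, which simply points to the relevant formulas in \cite{LossSloane} and records the extra lower-order term that appears there.
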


Finally, let us note that the  symmetric bilinear form obtained from
$\mathcal{E}$ by polarisation is, up to
some multiplicative constant, the Dirichlet form of the censored
stable process in $(-1,1)$, for the  definition
of its domain  see \cite{BBC}.
We may also write a killed process counterparts of
Theorem~\ref{hardyerrorterm} and Corollary~\ref{hardy},
and it is interesting that they have
then a  simpler form.
Namely, we get the following
\begin{cor}\label{hardyk}
Let $0<\alpha<2$ and
 $w(x)=(1-x^2)^{(\alpha-1)/2}$.
For every $u\in C_c(-1,1)$,
\begin{align*}
\frac{1}{2}
\int_\R \! \int_\R 
\frac{(u(x)-u(y))^2}{|x-y|^{1+\alpha}} \,dx\,dy
 = &
\frac{1}{2} 
\int_{-1}^1 \! \int_{-1}^1
\left(\frac{u(x)}{w(x)} - \frac{u(y)}{w(y)}\right)^2 
   \frac{w(x)w(y)}{|x-y|^{1+\alpha}} \,dx\,dy\\
&+
\frac{B(\frac{1+\alpha}{2}, \frac{2-\alpha}{2})}{\alpha}
\int_{-1}^1 {u^2(x)}
\;
(1-x^2)^{-\alpha}dx\\
\geq &
\frac{B(\frac{1+\alpha}{2}, \frac{2-\alpha}{2})}{\alpha 2^\alpha} 
\int_{-1}^1 {u^2(x)}
\;
\left( \frac{1}{x+1}+\frac{1}{1-x} \right)^\alpha
dx.
\end{align*}
The constant $\frac{B(\frac{1+\alpha}{2},
  \frac{2-\alpha}{2})}{\alpha 2^\alpha}$
in the above inequality may not be replaced by a bigger one,
and there is no remainder term of the form (\ref{Hardyin}).
\end{cor}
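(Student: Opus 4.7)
The plan is to deduce Corollary~\ref{hardyk} from Theorem~\ref{hardyerrorterm} by carefully accounting for the contribution from outside the interval. Since $u\in C_c(-1,1)$ is extended by zero to $\R$, I would begin by splitting
\[
\tfrac{1}{2}\int_\R\int_\R \tfrac{(u(x)-u(y))^2}{|x-y|^{1+\alpha}}\,dx\,dy
= \mathcal{E}(u) + \int_{-1}^1 u(x)^2 \int_{|y|>1} \tfrac{dy}{|x-y|^{1+\alpha}}\,dx,
\]
where the off-diagonal pieces combine into the single boundary term because $u(y)=0$ for $|y|>1$ and by symmetry of the integrand. The inner one-dimensional integral is elementary and equals $\alpha^{-1}\bigl[(1+x)^{-\alpha}+(1-x)^{-\alpha}\bigr]$; using $(1+x)(1-x)=1-x^2$, I would rewrite it as $\alpha^{-1}\bigl[(1+x)^{\alpha}+(1-x)^{\alpha}\bigr](1-x^2)^{-\alpha}$ so that it matches the density appearing in the remainder of (\ref{hardyeq}).

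Next I would substitute the identity of Theorem~\ref{hardyerrorterm} for $\mathcal{E}(u)$. The crucial cancellation is that the factor $\phi(x)=2^\alpha-(1+x)^\alpha-(1-x)^\alpha$ in the lower-order remainder cancels exactly against $(1+x)^\alpha+(1-x)^\alpha$ from the killing integral, leaving only $\alpha^{-1}2^\alpha(1-x^2)^{-\alpha}$. Together with the coefficient $2^\alpha\kappa_{1,\alpha}$ already present in (\ref{hardyeq}), and the definition (\ref{eq:ns}) of $\kappa_{1,\alpha}$, the two constants add up to exactly $\alpha^{-1}B\bigl(\tfrac{1+\alpha}{2},\tfrac{2-\alpha}{2}\bigr)$; this gives the announced equality. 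The inequality then follows at once by discarding the nonnegative squared-difference double integral and invoking $\tfrac{1}{1+x}+\tfrac{1}{1-x}=\tfrac{2}{1-x^2}$ to convert the density into the stated form.

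For optimality of the constant and the absence of a remainder term, I would test with smooth compactly supported approximations $u_n=w\chi_n$ of the ``ground state'' $w(x)=(1-x^2)^{(\alpha-1)/2}$, which itself lies outside $C_c(-1,1)$. Since $u_n/w=\chi_n$, the squared-difference double integral in the equality reduces to the $w$-weighted fractional seminorm $\tfrac{1}{2}\int\!\int(\chi_n(x)-\chi_n(y))^2\,w(x)w(y)|x-y|^{-1-\alpha}\,dx\,dy$, which can be kept bounded by an appropriate choice of cutoffs, while $\int u_n^2(1-x^2)^{-\alpha}\,dx=\int\chi_n^2(1-x^2)^{-1}\,dx$ diverges logarithmically; this forces the leading constant to be sharp. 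At the same time $\int u_n^2\bigl(\tfrac{1}{1+x}+\tfrac{1}{1-x}\bigr)^{\alpha-1}\,dx=2^{\alpha-1}\int \chi_n^2\,dx$ stays bounded and bounded away from zero, which precludes adding any positive multiple of such a remainder term. The main obstacle I anticipate is the careful choice of $\chi_n$ and the weighted seminorm estimate near $\pm 1$, where $w$ either vanishes or blows up depending on the sign of $\alpha-1$; the algebraic part of the argument, by contrast, is a short cancellation.
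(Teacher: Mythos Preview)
Your proposal is correct and follows essentially the same route as the paper: split the full-line quadratic form into $\mathcal{E}(u)$ plus the killing integral $\int_{-1}^{1}u^2(x)\,\alpha^{-1}\bigl[(1+x)^{-\alpha}+(1-x)^{-\alpha}\bigr]\,dx$, substitute Theorem~\ref{hardyerrorterm}, and observe the cancellation of $\phi$ against the killing term to obtain the constant $\alpha^{-1}B\bigl(\tfrac{1+\alpha}{2},\tfrac{2-\alpha}{2}\bigr)$; for sharpness and the absence of a remainder, approximate $w$ by $u_n=\chi_n w$ with cutoffs supported on $[-1+1/n,\,1-1/n]$, equal to $1$ on $[-1+2/n,\,1-2/n]$, and with $|\chi_n'|\le 2n$. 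Your anticipated obstacle---the uniform bound on the $w$-weighted seminorm of $\chi_n$---is indeed the only nontrivial point left, and the paper likewise leaves it to the reader; a scaling argument near the endpoints (mapping the transition layer to unit scale) shows this quantity stays $O(1)$.
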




\section{Proofs}
We define
\[
 L u(x) =  \lim_{\varepsilon\to 0^+}
\int_{(-1,1)\cap \{|y-x|>\varepsilon\}}\frac{u(y)-u(x)}{|x-y|^{d+\alpha}} \,dy.
\]
Note that $L$ is, up to the multiplicative constant, the 
regional fractional Laplacian for an interval $(-1,1)$, see
\cite{MR2214908}.  

\begin{lem}\label{laplasjanup}
Let $p>-1$ and $u_p(x) = (1-x^2)^p$.
For $0<\alpha<2$ we have
\begin{align*}
 L u_p(x) &=
\frac{(1-x^2)^{p-\alpha}}{\alpha} \Bigg(
(1 - x)^\alpha + (1 + x)^\alpha
- (2p+2-\alpha)B(p+1,1-\alpha/2) \\
&
 +\, {p.v.}\int_{-1}^1 \frac{(1-wx)^{\alpha-1-2p}-1}{|w|^{1+\alpha}}\;
 (1-w^2)^p\,dw
\Bigg).
\end{align*}
If we denote the  principal value integral above by $I(p)$, then
\begin{align*}
I(\frac{\alpha-3}{2}) &= x^2 B(p+1,1-\alpha/2) \quad\textrm{if $1<\alpha<2$;} \\
I(\frac{\alpha-2}{2}) &= I(\frac{\alpha-1}{2}) = 0.
\end{align*}
\end{lem}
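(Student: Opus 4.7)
The plan is to compute $Lu_p(x)$ by means of the M\"obius substitution $y = (x-w)/(1-xw)$, which bijects $(-1,1)$ with itself (reversing orientation), sends $y = x$ to $w = 0$, and satisfies the standard identities
\[
1-y^2 = \frac{(1-x^2)(1-w^2)}{(1-wx)^2}, \qquad x-y = \frac{w(1-x^2)}{1-wx}, \qquad \frac{dy}{dw} = -\frac{1-x^2}{(1-wx)^2}.
\]
Substituting these and tracking factors carefully converts the defining principal-value integral into
\[
Lu_p(x) = (1-x^2)^{p-\alpha}\,\mathrm{p.v.}\int_{-1}^1 \frac{(1-w^2)^p(1-wx)^{\alpha-1-2p} - (1-wx)^{\alpha-1}}{|w|^{1+\alpha}}\,dw.
\]

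To extract $I(p)$, I would add and subtract $(1-w^2)^p$ and $1$ in the numerator, splitting the integrand into three pieces: (i) $(1-w^2)^p[(1-wx)^{\alpha-1-2p}-1]/|w|^{1+\alpha}$, whose integral is $I(p)$; (ii) the $x$-independent piece $[(1-w^2)^p-1]/|w|^{1+\alpha}$; and (iii) the $p$-independent piece $[1-(1-wx)^{\alpha-1}]/|w|^{1+\alpha}$. Piece (ii) is absolutely convergent for $\alpha<2$; the substitution $s=w^2$ and a single integration by parts evaluate it to a multiple of $B(p+1,1-\alpha/2)$, where one uses $p\,\Gamma(p) = \Gamma(p+1)$ to rearrange the $\Gamma$-factors into beta-function form. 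For piece (iii), call its integral $J_2(x)$; then $J_2(0)=0$, and differentiation under the integral gives $J_2'(x) = (\alpha-1)\int_0^1 w^{-\alpha}[(1-wx)^{\alpha-2}-(1+wx)^{\alpha-2}]\,dw$. A further integration by parts reduces this to integrals of the form $\int_0^1 w^{1-\alpha}(1\mp wx)^{\alpha-3}\,dw$, each of which collapses to $(1\mp x)^{\alpha-2}/(2-\alpha)$ via the classical identity ${}_2F_1(a,b;a;z)=(1-z)^{-b}$. A telescoping simplification then gives $J_2'(x) = (1+x)^{\alpha-1} - (1-x)^{\alpha-1}$, whence $J_2(x) = [(1-x)^\alpha+(1+x)^\alpha-2]/\alpha$. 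Summing pieces (i)--(iii) and collecting the $\alpha$-factors yields the stated identity.

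For the three special values of $I$, one substitutes the indicated $p$ directly into its defining integral. At $p=(\alpha-1)/2$ the exponent $\alpha-1-2p$ vanishes, so the integrand is identically zero. At $p=(\alpha-2)/2$ the exponent equals $1$, making the integrand proportional to $w(1-w^2)^p/|w|^{1+\alpha}$, which is odd in $w$ and whose principal value is zero. At $p=(\alpha-3)/2$ the exponent equals $2$; expanding $(1-wx)^2-1=-2wx+w^2x^2$ kills the odd term, and $s=w^2$ reduces the remaining even term to $x^2\int_0^1 s^{-\alpha/2}(1-s)^p\,ds = x^2 B(p+1,1-\alpha/2)$, with $\alpha>1$ being exactly what one needs for convergence at $s=1$ since $p=(\alpha-3)/2$.

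I expect the main obstacle to be the careful bookkeeping of the principal value through the M\"obius substitution and the subsequent splittings: piece (iii) converges only as a principal value when $\alpha\ge 1$, so separating it into $w>0$ and $w<0$ halves requires one to rely on the cancellation of the leading-order odd singularity before integrating by parts. The remaining auxiliary computations are standard beta/hypergeometric exercises, valid throughout $0<\alpha<2$ and $p>-1$.
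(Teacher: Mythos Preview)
Your approach is essentially the paper's: the same M\"obius substitution, the same three-way split into the pieces you label (i), (ii), (iii) --- these are exactly the paper's $I(p)$, $Lu_p(0)$, and $-I$ --- and the same evaluation of piece~(ii) via $s=w^2$ and one integration by parts. The only point of divergence is how you handle piece~(iii). The paper observes that
\[
\frac{(1-wx)^{\alpha-1}}{w^{1+\alpha}} = \Bigl(\tfrac{1}{w}-x\Bigr)^{\alpha-1}\frac{1}{w^2},
\]
which has the explicit antiderivative $-\tfrac{1}{\alpha}(1/w-x)^{\alpha}$; evaluating the $\varepsilon$-truncated integral directly over $(\varepsilon,1)$, pairing with its $w\mapsto -w$ counterpart, and letting $\varepsilon\to 0$ yields $J_2(x)$ in two lines. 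Your route through differentiation under the integral sign, a further integration by parts, and the identity ${}_2F_1(a,b;a;z)=(1-z)^{-b}$ reaches the same answer correctly but is noticeably longer and, as you acknowledge, requires more care in justifying the manipulations through the principal value. Your explicit computations of the three special values $I(p)$ match what the paper leaves to the reader.
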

\begin{proof}
We have, by changing variable $w=y^2$ and then integrating by parts,
\begin{align*}
Lu_p(0) &=
2\lim_{\varepsilon\to 0^+}\int_\varepsilon^1 \frac{(1-y^2)^p-1}{y^{1+\alpha}}\,dy  \\
&=
 2\lim_{\varepsilon\to 0^+}
\left(
 \frac{1}{2} \int_{\varepsilon^2}^1 (1-w)^p w^{-1-\alpha/2} [(1-w)+w]\,dw
 - \int_{\varepsilon}^1 y^{-1-\alpha}\,dy
\right) \\
&= 2\lim_{\varepsilon\to 0^+}
\bigg(
 \frac{1}{\alpha} (1-\varepsilon^2)^{p+1} \varepsilon^{-\alpha}
- \frac{p+1}{\alpha} \int_{\varepsilon^2}^1 (1-w)^p w^{-\alpha/2}\,dw\\
&\qquad\qquad + \frac{1}{2} \int_{\varepsilon^2}^1 (1-w)^p w^{-\alpha/2} \,dw
+ \frac{1}{\alpha} - \frac{\varepsilon^{-\alpha}}{\alpha}
\bigg)
\end{align*}
It is easy to see that
\[
\lim_{\varepsilon\to 0^+}
\left(
 \frac{1}{\alpha} (1-\varepsilon^2)^{p+1} \varepsilon^{-\alpha}
 - \frac{\varepsilon^{-\alpha}}{\alpha}
\right) =
\lim_{\varepsilon\to 0^+}
 \frac{\varepsilon^{2-\alpha}}{\alpha}\;
\frac{(1-\varepsilon^2)^{p+1}-1}{\varepsilon^2}
 = 0.
\]
Hence
\[
Lu_p(0)=  \frac{2}{\alpha}(1-(p+1-\alpha/2)B(p+1,1-\alpha/2)).
\]

We have, for $x_0\in (-1,1)$,
\[
Lu_p(x_0) =
p.v. \int_{-1}^1 \frac{(1-y^2)^p-(1-x_0^2)^p}{|y-x_0|^{1+\alpha}}\,dy.
\]
We change the variable in the following way
\begin{align*}
w&=\varphi(y):=\frac{x_0-y}{1-x_0y}; \qquad y=\varphi(w);\\
\varphi'(y)&=\frac{x_0^2-1}{(1-x_0y)^2};\\
y-x_0&= \frac{w(1-x_0^2)}{wx_0-1};\\
1-y^2&= \frac{(1-x_0^2)(1-w^2)}{(wx_0-1)^2}.
\end{align*}
Moreover the principal value integral transforms well.
We obtain
\begin{align}
Lu_p(x_0) &=
(1-x_0^2)^{p-\alpha}
 p.v.\int_{-1}^1 \frac{(1-w^2)^p - (1-wx_0)^{2p}}{|w|^{1+\alpha}}\;
 (1-wx_0)^{\alpha-1-2p}\,dw\nonumber\\
&=
 (1-x_0^2)^{p-\alpha} \Bigg[ Lu_p(0) -
p.v.\int_{-1}^1 \frac{(1-wx_0)^{\alpha-1} - 1}{|w|^{1+\alpha}}\,dw\label{calkap}\\
&
 + p.v.\int_{-1}^1 \frac{(1-wx_0)^{\alpha-1-2p}-1}{|w|^{1+\alpha}}\;
 (1-w^2)^p\,dw \Bigg].\nonumber
\end{align}
We calculate the integral in (\ref{calkap})
\begin{align*}
I &:=
p.v.\int_{-1}^1 \frac{(1-wx_0)^{\alpha-1} - 1}{|w|^{1+\alpha}}\,dw
=
\lim_{\varepsilon\to 0^+} (J_\varepsilon(x_0) + J_\varepsilon(-x_0)),
\end{align*}
where
\begin{align*}
 J_\varepsilon(x_0) &= 
\int_\varepsilon^1 \frac{(1-wx_0)^{\alpha-1} -
   1}{w^{1+\alpha}}\,dw\\
 &=
\int_\varepsilon^1 \Big( \frac{1}{w} - x_0 \Big)^{\alpha-1}
\frac{dw}{w^2} - \frac{\varepsilon^{-\alpha}-1}{\alpha}\\
&=
\frac{1}{\alpha}\Big( \frac{1}{\varepsilon} - x_0 \Big)^\alpha
- \frac{1}{\alpha}\Big( 1 - x_0 \Big)^\alpha 
 - \frac{\varepsilon^{-\alpha}-1}{\alpha}\\
&=\frac{1}{\alpha} - \frac{1}{\alpha}\Big( 1 - x_0 \Big)^\alpha
+ \frac{(1-\varepsilon x_0)^\alpha - 1}{\alpha \varepsilon^{\alpha}}.
\end{align*}
By l'H\^ospital rule we find that
\begin{align*}
I &=
\frac{2}{\alpha} - \frac{1}{\alpha}\Big( 1 - x_0 \Big)^\alpha
- \frac{1}{\alpha}\Big( 1 + x_0 \Big)^\alpha,
\end{align*}
and the first part of the lemma is proved.

Obtaining $I(p)$ for the three values of $p$ is easy and is omitted.
\end{proof}

Next lemma is in fact a very special case of  Proposition~2.3 from
\cite{FrankSeiringer}. In this special case the proof
may be simplified, and for reader's convenience we give the sketch of
the proof.

\begin{lem}\label{lemfitz}
For every $u\in C_c(-1,1)$ and any strictly positive function $w\in
C^2(-1,1)$ such that $Lw\leq 0$, we have
\[
 \mathcal{E}(u) = \int_{-1}^1 u^2(x)
    \frac{-L w(x)}{w(x)}\,dx
+
\frac{1}{2}
\int_{-1}^1 \! \int_{-1}^1
\left(\frac{u(x)}{w(x)} - \frac{u(y)}{w(y)}\right)^2 
   \frac{w(x)w(y)}{|x-y|^{1+\alpha}} \,dx\,dy.
\]
\end{lem}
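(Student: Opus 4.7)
The approach is a Fitzsimmons-type ``ground state substitution''. Setting $v:=u/w$ (which is continuous and compactly supported in $(-1,1)$, since $w$ is strictly positive and $C^2$ there), I would first verify the pointwise algebraic identity
\[
(u(x)-u(y))^2 = (v(x)-v(y))^2\, w(x)w(y) + \bigl(v(x)^2 w(x) - v(y)^2 w(y)\bigr)\bigl(w(x)-w(y)\bigr),
\]
which follows at once by writing $u(x)-u(y) = (v(x)-v(y))w(x) + v(y)(w(x)-w(y))$ and squaring.

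Next I would divide by $|x-y|^{1+\alpha}$ and integrate over the truncated region $A_\varepsilon := \{(x,y)\in(-1,1)^2 : |x-y|>\varepsilon\}$. The first summand is nonnegative and converges monotonically as $\varepsilon\to 0^+$ to the ``$v$-quadratic'' term on the right of the statement. For the cross term, one exploits the $x\leftrightarrow y$ symmetry of its integrand to combine its two halves into
\[
2\int_{-1}^1 \frac{u^2(x)}{w(x)}\int_{\{y\in(-1,1)\,:\,|y-x|>\varepsilon\}}\frac{w(x)-w(y)}{|x-y|^{1+\alpha}}\,dy\,dx,
\]
where I have used that $v^2 w = u^2/w$. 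The inner integral tends to $-Lw(x)$ as $\varepsilon\to 0^+$, and since $w\in C^2(-1,1)$ the convergence is locally uniform on compact subsets of $(-1,1)$. Because $u$ is compactly supported inside $(-1,1)$, on a compact neighbourhood of which $w$ is bounded below away from $0$, dominated convergence then yields $-2\int u^2 Lw/w\,dx$. Dividing everything by $2$ produces the claimed identity.

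The main obstacle is the conditional nature of the cross integral near the diagonal: neither of the two terms $v(x)^2 w(x)(w(x)-w(y))$ and $-v(y)^2 w(y)(w(x)-w(y))$, taken separately, yields an absolutely integrable object after dividing by $|x-y|^{1+\alpha}$. Symmetrising \emph{before} letting $\varepsilon\to 0^+$ is essential: after symmetrisation the $(y-x)$-linear part of $w(x)-w(y)$ cancels by oddness, and the quadratic remainder produced by Taylor-expanding $w$ to second order gives an integrable $|y-x|^{1-\alpha}$ singularity, using both $\alpha<2$ and $w\in C^2$. The sign hypothesis $Lw\le 0$ plays no role in the algebraic identity itself; it serves only to guarantee that the first term on the right is nonnegative, which is what makes the lemma usable as a Hardy-type lower bound.
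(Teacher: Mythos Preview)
Your argument is correct and follows the same route as the paper: the pointwise identity you state is exactly the equality \cite[(9)]{KBBD-bc} quoted there (rewritten with $v^2w=u^2/w$), and the paper likewise integrates against $1_{\{|x-y|>\varepsilon\}}|x-y|^{-1-\alpha}\,dx\,dy$, uses the Taylor expansion of $w$ and the compact support of $u$, and passes to the limit by dominated convergence. Your remark that $Lw\le 0$ is irrelevant for the identity itself is also accurate.
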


\begin{proof}
We use the elementary equality \cite[(9)]{KBBD-bc}
\begin{align*}
(u(x)-u(y))^2 &+ u^2(x) \frac{w(y)-w(x)}{w(x)}
+ u^2(y) \frac{w(x)-w(y)}{w(y)} \\
&= \left( \frac{u(x)}{w(x)} - \frac{u(y)}{w(y)} \right)^2
w(x)w(y),
\end{align*}
then integrate against the measure
$1_{\{|x-y|>\varepsilon\}}|x-y|^{-1-\alpha}\,dx\,dy$
and take $\varepsilon\to 0$. We use Taylor expansion for $w$
and the compactness of the support of $u$
 to apply Lebesgue dominated convergence theorem.
\end{proof}

\begin{proof}[Proof of Theorem~\ref{hardyerrorterm}]
 The theorem follows immediately from Lemma~\ref{laplasjanup}
applied to $w(x)=(x^2-1)^{(\alpha-1)/2}$ and Lemma~\ref{lemfitz}
applied to $p=(\alpha-1)/2$.
\end{proof}

\begin{proof}[Proof of Corollary~\ref{hardy}]
By Theorem~\ref{hardyerrorterm} and symmetry of the function $\phi$,
to prove (\ref{Hardyin})
it is enough to show that
\begin{equation}\label{claim}
 \phi(x) := 2^\alpha-(1+x)^\alpha - (1-x)^\alpha \geq (2^\alpha-2)(1-x^2), \quad x\in [0,1].
\end{equation}

After substitution $u=x^2$, we see that it is enough to show that
the function
\[
 g(u):= (2^\alpha-2)u -(1-\sqrt{u})^\alpha -(1+\sqrt{u})^\alpha + 2
\]
is concave. To show this, it suffices to show that
\[
 g'(u) = 2^\alpha-2 + \frac{\alpha}{2\sqrt{u}}
  \left(
  (1-\sqrt{u})^{\alpha-1} - (1+\sqrt{u})^{\alpha-1}
  \right)
\]
is a decreasing function. We again substitute $t=u^2$ and observe that
\[
 \frac{(1-t)^{\alpha-1} - (1+t)^{\alpha-1}}{t} = \frac{h(t) - h(0)}{t},
\]
where $h(t) = (1-t)^{\alpha-1} - (1+t)^{\alpha-1}$. Now since $h$ is
concave, the function $t\mapsto \frac{h(t) - h(0)}{t}$ is decreasing,
and so is the function $g'$. This proves the claim (\ref{claim}) and,
consequently, the inequality (\ref{Hardyin}).

 The fact that the constant $\kappa_{1,\alpha}$ in (\ref{Hardyin})
is optimal follows from \cite{KBBD-bc}.
\end{proof}

\begin{proof}[Proof of Theorem~\ref{hardyomega}]
The proof is analogous to the proof of Theorem~1.1 in
\cite{LossSloane}. In inequality \cite[(33)]{LossSloane}
we would get an extra lower order term. Namely, using notation
from \cite{LossSloane}, the extra term would be
\begin{align*}
\frac{1}{2}\lambda_{1,\alpha}\int_{S^{n-1}}dw& \int_{\{x:x\cdot w=0\}}
d\mathcal{L}_w(x)
\int_{x+sw\in \Omega} ds |f(x+sw)|^2 \\
\times&
  \left[ \frac{1}{d_w(x+sw)} + \frac{1}{\delta_w(x+sw)}
  \right]^{\alpha-1} 
 \frac{1}{d_w(x+sw) + \delta_w(x+sw)}\\
= 
\frac{1}{2}\lambda_{1,\alpha}\int_{S^{n-1}}dw&
  \int_\Omega |f(x)|^2
  \left[ \frac{1}{d_{w,\Omega}(x)} + \frac{1}{\delta_{w,\Omega}(x)}
  \right]^{\alpha-1}
 \frac{1}{d_{w,\Omega}(x) + \delta_{w,\Omega}(x)}\\
&\geq
\frac{\lambda_{n,\alpha}}{\diam\Omega}
\int_\Omega \frac{|f(x)|^2}{M_{\alpha-1}(x)^{\alpha-1}}\,dx.
\end{align*}
Thus we get (\ref{hardyomegain}).

Note that \cite[(9)]{LossSloane}
 is valid for any $\alpha>0$. We apply it to $\alpha-1$ in
place of $\alpha$ and we get inequality (\ref{hardyconvexin}).
\end{proof}

\begin{proof}[Proof of Corollary~\ref{hardyk}]
The equality follows from Theorem~\ref{hardyerrorterm} and
the following easy formula
\[
\frac{1}{2}
\int_\R \! \int_\R 
\frac{(u(x)-u(y))^2}{|x-y|^{1+\alpha}} \,dx\,dy
=
 \frac{1}{2} \mathcal{E}(u)
 + \int_{-1}^1 u^2(x) \frac{1}{\alpha} \left(
 (1+x)^{-\alpha} + (1-x)^{-\alpha} \right)\,dx.
\]
The sharpness of the constant and the lack of a remainder term
follow from approximating 
the function $w$, for example by $u=\psi_n w$, where
 $\supp \psi_n \subset [-1+1/n, 1-1/n]$,
$\psi_n(x)=1$ on $(-1+2/n, 1-2/n)$
and $|\psi_n'(x)| \leq 2n$ on $(-1,1)$.
\end{proof}

\bibliographystyle{abbrv}
\bibliography{bibca}

\end{document}